\def\pref#1{(\ref{#1})}
\theoremstyle{plain}
\newtheorem{prop}{Proposition}[section]
\newtheorem{thm}[prop]{Theorem}
\newtheorem{lem}[prop]{Lemma}
\newtheorem{cor}[prop]{Corollary}
\newtheorem{conj}[prop]{Conjecture}
\theoremstyle{definition}
\newtheorem{defn}[prop]{Definition}
\newtheorem{rem}[prop]{Remark}
\def\p@figure{Fig. }
\def\p@enumii{}
\newcommand{\V}{\mathrm{V}}
\newcommand{\E}{\mathrm{E}}
\newcommand{\N}{\mathrm{N}}
\newcommand{\T}{\mathrm{Tr}}
\renewcommand{\b}[1]{\overline{#1}}
\newcommand{\se}{\subseteq}
\newcommand{\link}{\mathrm{link}}
\newcommand{\sm}{\setminus }
\newcommand{\give}{$\Rightarrow$}
\newcommand{\rgive}{$\Leftarrow$}
\newcommand{\ifof}{if and only if }
\newcommand{\f}[2]{\frac{#1}{#2}}
\newcommand{\tohi}{\emptyset}
\newcommand{\Q}{\mathbb{Q}}
\begin{document}

\title{Trung's Construction and the Charney-Davis Conjecture}
\author{Ashkan Nikseresht and Mohammad Reza Oboudi\\
\it\small Department of Mathematics, Shiraz University,\\
\it\small 71457-13565, Shiraz, Iran\\
\it\small E-mail: ashkan\_nikseresht@yahoo.com\\
\it\small E-mail: mr\_oboudi@yahoo.com }
\date{}
\maketitle

\begin{abstract}
We consider a construction by which we obtain a simple graph $\T(H,v)$ from a simple graph $H$ and a non-isolated
vertex $v$ of $H$. We call this construction ``Trung's construction''. We prove that $\T(H,v)$ is well-covered, W$_2$
or Gorenstein if and only if $H$ is so. Also we present a formula for computing the independence polynomial of
$\T(H,v)$ and investigate when $\T(H,v)$ satisfies the Charney-Davis conjecture. As a consequence of our results, we
show that every Gorenstein planar graph with girth at least four, satisfies the Charney-Davis conjecture.
\end{abstract}

Keywords:  Gorenstein simplicial complex; Edge ideal; Trung's construction; Independence polynomial;\\
\indent 2010 Mathematical Subject Classification: 13F55, 05E40, 13H10, 05C31.

                                        \section{Introduction}

Throughout this paper, $K$ is a field, $S=K[x_1,\ldots, x_n]$ and $G$ denotes a simple undirected graph with vertex set
$\V(G)=\{v_1,\ldots, v_n\}$ and edge set $\E(G)$. Recall that the \emph{edge ideal} $I(G)$ of $G$ is the ideal of $S$
generated by $\{x_ix_j|v_iv_j\in \E(G)\}$. Many researchers have studied how algebraic properties of $S/I(G)$ relates
to combinatorial properties of $G$ (see \cite{large girth, planar goren, stanley, hibi} and references therein). Recall
that $G$ is called a Gorenstein (resp, Cohen-Macaulay or CM for short) graph over $K$, if $S/I(G)$ is a Gorenstein
(resp. CM) ring. When $G$ is Gorenstein (resp. CM) over every field, we say that $G$ is Gorenstein (resp. CM). Finding
combinatorial conditions on a graph equivalent to being Gorenstein has recently gained attention. For example, in
\cite{large girth} a characterization of planar Gorenstein graphs of girth at least four is presented. Also in
\cite{planar goren} a condition on a planar graph equivalent to being Gorenstein is stated.

An importance of characterization of Gorenstein graphs comes from the Charney-Davis conjecture on the Euler
characteristic of certain manifolds (see \cite{charney} and \cite{frontier}). This conjecture could be restated in
terms of independence polynomials of Gorenstein graphs (see \pref{char conj}).

In this paper, first we recall some needed concepts and preliminary results. Then in Section 3, we show that planar
Gorenstein graphs with girth at least four satisfy the Charney-Davis conjecture. All Gorenstein graphs with girth four
are constructed using a recursive construction. We call a more general form of this recursive construction ``Trung's
construction'' and show that this construction preserves several properties related to independent sets such as being
well-covered, W$_2$ or Gorenstein. We also present a formula for computing the independence polynomial of graphs
constructed using Trung's construction and study when these graphs satisfy the Charney-Davis conjecture.
                                        \section{Preliminaries}

Recall that a \emph{simplicial complex} $\Delta$ on the vertex set $V=\{v_1,\ldots, v_n\}$ is a family of subsets of
$V$ (called \emph{faces}) with the property that $\{v_i\}\in \Delta$ for each $i\in [n]=\{1,\ldots,n\}$ and if $A\se
B\in \Delta$, then $A\in \Delta$. In the sequel, $\Delta$ always denotes simplicial complex. Thus the family
$\Delta(G)$ of all cliques of a graph $G$ is a simplicial complex called the \emph{clique complex} of $G$. Also
$\Delta(\b G)$ is called the \emph{independence complex} of $G$, where $\b G$ denotes the complement of $G$. Note that
the elements of $\Delta(\b G)$ are independent sets of $G$. If $\Delta=\Delta(\b G)$ for some graph $G$, then $\Delta$
is called a \emph{flag} complex. The ideal of $S$ generated by $\{\prod_{v_i\in F} x_i|F\se V$ is a non-face of
$\Delta\}$ is called the \emph{Stanley-Reisner ideal} of $\Delta$ and is denoted by $I_\Delta$ and $S/I_\Delta$ is
called the \emph{Stanley-Reisner algebra} of $\Delta$ over $K$. Therefore we have $I_{\Delta(\b G)}= I(G)$. Many
researchers have studied the relation between combinatorial properties of $\Delta$ and algebraic properties of
$S/I_\Delta$, see for example \cite{hibi, stanley, our chordal, my vdec} and their references.

By the dimension of a face $F$ of $\Delta$, we mean $|F|-1$ and the dimension of $\Delta$ is defined as
$\max\{\dim(F)|F\in \Delta\}$. Let $f_i$ be the number of $i$-dimensional faces of $\Delta$ (if $\Delta\neq \tohi$,
then $f_{-1}=1$), then $(f_{-1}, \ldots, f_{d-1})$ is called the \emph{$f$-vector} of $\Delta$, where
$d-1=\dim(\Delta)$. Now define $h_i$'s such that $ h(t)= \sum_{i=0}^d h_it^i= \sum_{i=0}^d f_{i-1}t^i(1-t)^{d-i}$. Then
$h(t)$ is called the \emph{$h$-polynomial} of $\Delta$. It can be shown that the Hilbert series of $S/I_\Delta$ is
$h(t)/(1-t)^ d$ (see \cite[Proposition 6.2.1]{hibi}). Denote by $\alpha(G)$ the \emph{independence number} of $G$, that
is, the maximum size of an independent set of $G$. Then the polynomial $I(G,x)=\sum_{i=0}^{\alpha(G)} a_ix^i$, where
$a_i$ is the number of independent sets of size $i$ in $G$, is called the \emph{independence polynomial} of $G$. Note
that $a_i=f_{i-1}$ where $(f_{-1},\ldots, f_{\alpha(G)-1})$ is the $f$-vector of $\Delta(\b G)$. There are many papers
related to this polynomial in the literature, see for example \cite{survey} and the references therein. It is easy to
check that the $h$-polynomial $h(t)$ of $\Delta(\b G)$ is $(1-t)^{\alpha(G)} I(G,t/(1-t))$.

A simplicial complex $\Delta$ is said to be \emph{Gorenstein*} when $S/I_\Delta$ is Gorenstein when $K=\Q$ is the field
of rational numbers (for the definition of Gorenstein rings and other algebraic notions the reader is referred to
\cite{CM ring}) and there is no vertex $v$ of $\Delta$ such that $\{v\}\cup F\in \Delta$ for every $F\in \Delta$. Note
that if $\Delta=\Delta(\b G)$, then $\Delta$ is Gorenstein* \ifof $G$ is Gorenstein over $\Q$ (that is $S/I(G)$ is
Gorenstein when $K=\Q$) and has no isolated vertex.

The Charney-Davis conjecture states that if $\Delta$ is a Gorenstein* flag complex of dimension $2e-1$, then
$(-1)^eh(-1) \geq 0$. In \cite[Problem 4]{frontier}, Richard P. Stanley mentioned this conjecture as one of the
``outstanding open problems in algebraic combinatorics'' at the start of the 21st century. This conjecture was proved
in dimension 3 in \cite{davis} and Stanley in \cite{stanley CD} showed that this conjecture holds for barycentric
subdivisions of shellable spheres. To see some other cases under which this conjecture is established, see \cite{froh,
welker}. The following is a ``more graph theoretical'' restatement of the Charney-Davis conjecture.

\begin{conj}[Charney \& Davis] \label{char conj}
If $G$ is a graph with no isolated vertices which is Gorenstein over $\Q$ and $\alpha(G)$ is even, then
$$(-1)^\f{\alpha(G)}{2} I(G,-\f{1}{2}) \geq 0.$$
\end{conj}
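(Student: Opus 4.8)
The displayed inequality is offered as a restatement of the Charney--Davis conjecture, so what I would actually establish is its equivalence with the simplicial form recalled just above: that $(-1)^eh(-1)\ge 0$ for every Gorenstein* flag complex $\Delta$ of dimension $2e-1$. The only tool needed is the dictionary $\Delta=\Delta(\b G)$. Every complex of the form $\Delta(\b G)$ is flag by definition, and conversely any flag complex $\Delta$ on a vertex set $V$ equals $\Delta(\b G)$ for the graph $G$ on $V$ whose edges are exactly the two-element non-faces of $\Delta$ (here flagness is precisely what guarantees that a subset all of whose pairs lie in $\Delta$ is itself a face, i.e.\ an independent set of $G$). Hence the quantifier ``for every flag complex'' matches ``for every graph'' under this correspondence.

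Next I would match the hypotheses. As already noted in the excerpt, $\Delta(\b G)$ is Gorenstein* \ifof $G$ is Gorenstein over $\Q$ and has no isolated vertex --- exactly the hypothesis imposed on $G$ in the restatement; in particular the ``no apex'' condition $\{v\}\cup F\in\Delta$ for all $F$ translates to $v$ being adjacent to no vertex, i.e.\ isolated. For the dimension, the faces of $\Delta(\b G)$ are the independent sets of $G$, so $\dim\Delta(\b G)=\alpha(G)-1$; thus $\dim\Delta=2e-1$ is the same as $\alpha(G)=2e$, i.e.\ $\alpha(G)$ even with $e=\alpha(G)/2$. So the two statements have literally the same hypotheses once $\Delta=\Delta(\b G)$.

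Finally I would translate the conclusion using the identity recalled above, that the $h$-polynomial of $\Delta(\b G)$ is $h(t)=(1-t)^{\alpha(G)}I(G,t/(1-t))$. Substituting $t=-1$ gives
$$h(-1)=2^{\alpha(G)}\,I\l(G,-\f{1}{2}\r),$$
hence, with $e=\alpha(G)/2$,
$$(-1)^e h(-1)=(-1)^{\alpha(G)/2}\,2^{\alpha(G)}\,I\l(G,-\f{1}{2}\r).$$
Since $2^{\alpha(G)}>0$, the inequality $(-1)^e h(-1)\geq 0$ holds \ifof $(-1)^{\alpha(G)/2}I(G,-1/2)\geq 0$, which is the asserted restatement. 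I do not expect a genuine obstacle: the entire argument is a change of language, and the only thing needing care is the sign-and-parity bookkeeping --- checking that the apex condition really is isolatedness of a vertex, and that the conventions $d-1=\dim\Delta$ and $e=\alpha(G)/2$ make the exponent of $-1$ come out as $\alpha(G)/2$ with no off-by-one slip.
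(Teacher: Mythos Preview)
The paper does not give a proof for this item: it is stated as a conjecture and merely asserted to be a ``more graph theoretical'' restatement of the Charney--Davis conjecture, with the equivalence left implicit. Your proposal correctly supplies that equivalence, using exactly the identifications the paper records in its preliminaries (flag complexes $\leftrightarrow$ independence complexes, Gorenstein* $\leftrightarrow$ Gorenstein over $\Q$ with no isolated vertex, $\dim\Delta(\b G)=\alpha(G)-1$, and $h(t)=(1-t)^{\alpha(G)}I(G,t/(1-t))$); the substitution $t=-1$ giving $h(-1)=2^{\alpha(G)}I(G,-1/2)$ is the same computation the paper carries out in the proof of Lemma~\ref{Euler}\pref{Dehn}. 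So your argument is correct and is precisely the intended translation --- there is nothing more to compare against.
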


Next we recall some properties of Gorenstein graphs. A graph $G$ is called \emph{well-covered}, if all maximal
independent sets of $G$ have size $\alpha(G)$ and it is said to be a \emph{W$_2$ graph}, if $|\V(G)|\geq 2$ and every
pair of disjoint independent sets of $G$ are contained in two disjoint maximum independent sets. In some texts, W$_2$
graphs are called 1-well-covered graphs. The following lemma states the relation of Gorenstein graphs and W$_2$ graphs.
\begin{lem}[{\cite[Lemma 3.1]{large girth}}]\label{W2}
Every Gorenstein graph without isolated vertices is a W$_2$ graph.
\end{lem}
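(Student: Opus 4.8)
The plan is to reduce the assertion to two standard facts: that a Gorenstein* simplicial complex is doubly Cohen-Macaulay, and a classical combinatorial characterization of W$_2$ graphs in terms of one-vertex deletions.

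So let $G$ be Gorenstein with no isolated vertices. Then $G$ is in particular Gorenstein over $\Q$, and since $G$ has no isolated vertex the independence complex $\Delta:=\Delta(\b G)$ is Gorenstein*; also $G$ has an edge, so $|\V(G)|\ge 2$. Because a Cohen-Macaulay complex is pure and $\Delta$ is Cohen-Macaulay, all maximal independent sets of $G$ have size $\alpha(G)$, that is, $G$ is well-covered. Now I would invoke the fact that a Gorenstein* complex is doubly Cohen-Macaulay: over $\Q$ it is a homology sphere, so deleting any vertex from it leaves a homology ball, and hence a Cohen-Macaulay complex of the same dimension (see, e.g., \cite{stanley, hibi}). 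Thus for each vertex $v$ of $\Delta$ the deletion $\Delta\sm v=\{F\in\Delta\mid v\notin F\}$ is Cohen-Macaulay with $\dim(\Delta\sm v)=\dim\Delta$. Since $\Delta\sm v$ is exactly the independence complex $\Delta(\b{G-v})$ of $G-v$, it follows that $G-v$ is well-covered and $\alpha(G-v)=\dim(\Delta\sm v)+1=\dim\Delta+1=\alpha(G)$.

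This reduces the lemma to the purely graph-theoretic statement that a graph $G$ with $|\V(G)|\ge 2$ and no isolated vertices is a W$_2$ graph whenever $G$ is well-covered and $G-v$ is well-covered with $\alpha(G-v)=\alpha(G)$ for every vertex $v$. This is a known characterization of W$_2$ (equivalently, $1$-well-covered) graphs; see \cite{large girth} and the references cited there. Granting it, we are done: if $A$ and $B$ are disjoint independent sets, the vertex-deletion hypotheses are precisely what makes it possible to extend them simultaneously to a disjoint pair of maximum independent sets, which is the definition of a W$_2$ graph. The reverse direction of that characterization is elementary (for instance, if $A^*\supseteq A$ and $B^*\supseteq B$ are disjoint maximum independent sets witnessing W$_2$, then $B^*$ is a maximum independent set avoiding any $v\in A^*$, forcing $\alpha(G-v)=\alpha(G)$), so the substance is exactly the direction we use.

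The main subtlety I anticipate is the bridge between the two inputs: recording the (standard but genuinely homological) fact that ``Gorenstein* $\Rightarrow$ doubly Cohen-Macaulay,'' and then translating it through the identity $\Delta(\b G)\sm v=\Delta(\b{G-v})$ into well-coveredness statements about $G$ and its vertex-deleted subgraphs. Once that is set up, the remainder is bookkeeping together with a citation to the combinatorial description of W$_2$ graphs; inlining a proof of that description would essentially amount to reproving it, so I would prefer to quote it.
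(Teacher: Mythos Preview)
The paper does not prove this lemma at all: it is stated with a citation to \cite[Lemma~3.1]{large girth} and no argument is given. So there is nothing in the paper to compare your proposal against beyond the bare citation.

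That said, your outline is correct and is essentially the argument one finds behind the cited result. The two ingredients are exactly the ones you name: (a) a Gorenstein* complex is doubly Cohen--Macaulay (so $\Delta(\b{G-v})=\Delta(\b G)\sm v$ is pure of dimension $\dim\Delta(\b G)$ for every $v$, giving that $G$ and each $G-v$ are well-covered with $\alpha(G-v)=\alpha(G)$); and (b) the combinatorial characterization of W$_2$ graphs via one-vertex deletions. For (b), the attribution should go to Staples (and later Pinter) rather than to \cite{large girth} itself---citing \cite{large girth} for the W$_2$ characterization while proving a lemma quoted from \cite{large girth} is a bit circular. With that citation adjusted, your argument stands; the only places where actual content is being invoked are the ``Gorenstein* $\Rightarrow$ 2-CM'' step (standard, e.g.\ in \cite{stanley}) and Staples' theorem, both of which you correctly flag as the load-bearing inputs.
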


Recall that if $F\in \Delta$, then $\link_\Delta(F)=\{A\sm F| F\se A\in \Delta\}$. Suppose that $F\se \V(G)$. By
$\N[F]$ we mean $F\cup \{v\in \V(G)| uv\in \E(G)$ for some $u\in F\}$ and we set $G_F=G\sm \N[F]$. We simply write
$G_v$ instead of $G_{\{v\}}$.Thus if $F$ is independent, then $\link_{\Delta(\b G)} F= \Delta(\b{G_F})$. Another
combinatorial property of a Gorenstein* graph $G$ is that it has an \emph{Eulerian independence complex}, that is, $G$
is well-covered and $I(G_F,-1)=(-1)^{\alpha(G_F)}$ for every independent set $F$ of $G$ (one can readily check that
this condition is equivalent to $\Delta(\b G)$ being an Euler complex as defined in \cite[Definition 5.4.1]{CM ring}).
\begin{lem}\label{Euler}
\begin{enumerate}
\item \label{goren=>Euler} A graph without isolated vertices is Gorenstein (over $K$) \ifof it has an Eulerian
    independence
    complex
    and
    is CM (over $K$).
\item \label{Dehn} If $G$ has an Eulerian independence complex and $\alpha(G)$ is odd, then $I(G,-1/2)=0$.
\end{enumerate}
\end{lem}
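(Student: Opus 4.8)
My plan is to translate both statements into the Stanley-Reisner theory of the independence complex $\Delta=\Delta(\b G)$ and then apply the classical homological criteria for $S/I_\Delta$ to be Cohen-Macaulay and Gorenstein, together with the Dehn-Sommerville relations for Euler complexes. Throughout I will use the observations recorded just before the lemma: because $G$ has no isolated vertices, $\Delta$ has no cone point, so $G$ is Gorenstein over $K$ \ifof $S/I_\Delta$ is Gorenstein; for every independent set $F$, $\link_\Delta F=\Delta(\b{G_F})$, of dimension $\alpha(G_F)-1$; the $h$-polynomial of $\Delta$ is $h(t)=(1-t)^{\alpha(G)}I(G,t/(1-t))$; and, comparing the independence polynomial with the definition of the reduced Euler characteristic, $I(H,-1)=-\tilde\chi(\Delta(\b H))$ for every graph $H$. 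I will also use, as noted in the paragraph preceding the lemma, that ``$G$ has an Eulerian independence complex'' means exactly that $\Delta$ is an Euler complex in the sense of \cite{CM ring}, i.e. $\Delta$ is pure and $\tilde\chi(\link_\Delta F)=(-1)^{\dim\link_\Delta F}$ for every face $F$ (including $F=\emptyset$).

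Granting this, part (1) reduces to the combinatorial statement that a cone-point-free complex $\Delta$ has $S/I_\Delta$ Gorenstein over $K$ \ifof $\Delta$ is Cohen-Macaulay over $K$ and Euler; one then re-reads this for graphs using $\link_\Delta F=\Delta(\b{G_F})$ and $I(G_F,-1)=-\tilde\chi(\Delta(\b{G_F}))$. To prove the combinatorial statement, recall that since $\Delta$ has no cone point the standard homological criterion \cite{CM ring} says $S/I_\Delta$ is Gorenstein over $K$ \ifof, for every face $F$ of $\Delta$ (including $F=\emptyset$), $\tilde H_i(\link_\Delta F;K)=0$ for $i<\dim\link_\Delta F$ and $\tilde H_{\dim\link_\Delta F}(\link_\Delta F;K)\cong K$. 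The family of vanishing conditions is precisely Reisner's criterion for $S/I_\Delta$ to be Cohen-Macaulay over $K$, which in addition forces $\Delta$, and hence every $\link_\Delta F$, to be pure. Assuming Cohen-Macaulayness, the Euler-Poincar\'e formula shows that the top reduced Betti number of $\link_\Delta F$ equals $1$ \ifof $\tilde\chi(\link_\Delta F)=(-1)^{\dim\link_\Delta F}$; so the remaining homology conditions hold for all $F$ exactly when $\Delta$ is an Euler complex. Combining these two observations gives the claimed equivalence.

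For part (2), the hypothesis makes $\Delta=\Delta(\b G)$ an Euler complex of dimension $\alpha(G)-1$, so by the Dehn-Sommerville relations for Euler complexes \cite{CM ring} its $h$-vector is symmetric: $h_i=h_{\alpha(G)-i}$ for all $i$, equivalently $h(t)=t^{\alpha(G)}h(1/t)$. Evaluating this at $t=-1$ gives $h(-1)=(-1)^{\alpha(G)}h(-1)$, so, since $\alpha(G)$ is odd, $h(-1)=0$. Substituting $t=-1$ into $h(t)=(1-t)^{\alpha(G)}I(G,t/(1-t))$ yields $h(-1)=2^{\alpha(G)}I(G,-1/2)$, and therefore $I(G,-1/2)=h(-1)/2^{\alpha(G)}=0$, as required.

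I do not expect a real obstacle: both parts are essentially dictionary translations. The points that deserve care are (a) confirming that the combinatorial ``Eulerian independence complex'' condition coincides with the Euler-complex definition of \cite{CM ring} (asserted already before the lemma, but worth a line), (b) quoting the correct homological characterisation of Gorenstein Stanley-Reisner rings for cone-point-free complexes, and the Dehn-Sommerville relations, from \cite{CM ring}, and (c) keeping the signs straight when passing between $I(G,x)$, the $h$-polynomial, and the reduced Euler characteristic. Part (2) is exactly the ``$\alpha(G)$ odd'' counterpart of the Charney-Davis inequality of Conjecture \ref{char conj}, recorded separately because that conjecture only addresses the even case.
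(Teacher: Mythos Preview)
Your proposal is correct and follows essentially the same route as the paper: part~(\ref{goren=>Euler}) is obtained by invoking the Bruns--Herzog characterisation of Gorenstein Stanley--Reisner rings for cone-point-free complexes (the paper simply cites \cite[Theorem~5.5.2]{CM ring}, whereas you unpack that theorem via Reisner's criterion plus the Euler--Poincar\'e argument), and part~(\ref{Dehn}) is obtained from the Dehn--Sommerville relations and the identity $h(-1)=2^{\alpha(G)}I(G,-1/2)$, exactly as in the paper. The only difference is level of detail; the underlying arguments coincide.
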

\begin{proof}
Part \pref{goren=>Euler} is an especial case of \cite[Theorem 5.5.2]{CM ring}. For part \pref{Dehn}, note that if
$h(t)$ is the $h$-polynomial of $\Delta(\b G)$, then by the Dehn-Sommerville equation (\cite[Theorem 5.4.2]{CM ring})
we have $h(-1)=0$. But $h(-1)=2^{\alpha(G)}I(G,-1/2)$ and the result follows.
\end{proof}

Since every link of every CM simplicial complex is CM, one of the consequences of the above result is the following.
\begin{cor}\label{G_F goren}
Suppose that $G$ is a Gorenstein graph (over $K$), then for every non-maximal independent set $F$ of $G$, the graph
$G_F$ is also Gorenstein (over $K$).
\end{cor}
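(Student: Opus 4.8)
The plan is to reduce the statement to Lemma \ref{Euler}(\ref{goren=>Euler}) by verifying separately that $G_F$ is CM and that $G_F$ has an Eulerian independence complex, and along the way to check that $G_F$ has no isolated vertices so that Lemma \ref{Euler} actually applies. First I would dispense with the isolated-vertex issue: since $G$ is Gorenstein without isolated vertices it is W$_2$ by Lemma \ref{W2}, and a standard property of W$_2$ graphs (indeed of any graph in which the deletion of a closed neighborhood of an independent set is again well-covered) is that for any independent $F$ the graph $G_F$ is well-covered with no isolated vertices; in fact, since $F$ is non-maximal, $\alpha(G_F)\ge 1$ and every vertex of $G_F$ lies in a maximum independent set of $G_F$, forcing it to have a neighbor there. (If needed, one invokes that $G$ being Gorenstein makes its independence complex Eulerian, hence $G_F$ is well-covered for all independent $F$ by the very definition of Eulerian independence complex recalled just before Lemma \ref{Euler}.)

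Next, the CM part: because $G$ is Gorenstein over $K$, it is in particular CM over $K$, i.e.\ $\Delta(\b G)$ is a CM simplicial complex over $K$. It is a classical fact (Reisner's criterion, or simply the statement that links of faces of a CM complex are CM) that $\link_{\Delta(\b G)}(F)$ is CM over $K$ for every face $F$. Since $F$ is an independent set of $G$, we have $\link_{\Delta(\b G)}(F)=\Delta(\b{G_F})$ as noted in the paragraph preceding Lemma \ref{Euler}, so $\Delta(\b{G_F})$ is CM over $K$, that is, $G_F$ is CM over $K$.

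For the Eulerian part: I must show $G_F$ is well-covered (already handled above) and that $I((G_F)_{F'},-1)=(-1)^{\alpha((G_F)_{F'})}$ for every independent set $F'$ of $G_F$. The key observation is that if $F'$ is independent in $G_F=G\sm \N[F]$, then $F\cup F'$ is independent in $G$ and $(G_F)_{F'}=G\sm\N[F]\sm\N[F']=G\sm\N[F\cup F']=G_{F\cup F'}$. Since $G$ has an Eulerian independence complex (being Gorenstein), applying the Eulerian condition for $G$ to the independent set $F\cup F'$ gives exactly $I(G_{F\cup F'},-1)=(-1)^{\alpha(G_{F\cup F'})}$, which is the desired identity for $(G_F)_{F'}$. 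Combined with well-coveredness, this shows $G_F$ has an Eulerian independence complex, and then Lemma \ref{Euler}(\ref{goren=>Euler}) yields that $G_F$ is Gorenstein over $K$. The only mildly delicate point — the ``main obstacle'' — is the bookkeeping ensuring $G_F$ has no isolated vertices (so that the Gorenstein conclusion of Lemma \ref{Euler} is available in the form stated), which is where non-maximality of $F$ and well-coveredness of $G_F$ are used; everything else is a transparent combination of the identity $(G_F)_{F'}=G_{F\cup F'}$ with the two halves of Lemma \ref{Euler}.
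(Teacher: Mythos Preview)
Your overall strategy---pass Cohen--Macaulayness to the link, pass the Eulerian property to the link via $(G_F)_{F'}=G_{F\cup F'}$, and then invoke Lemma~\ref{Euler}\pref{goren=>Euler}---is exactly what the paper does in its one-line proof (``every link of every CM simplicial complex is CM''), so the approach is the same.

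There is, however, a genuine slip in your treatment of isolated vertices. The sentence ``every vertex of $G_F$ lies in a maximum independent set of $G_F$, forcing it to have a neighbor there'' is not a valid inference: an isolated vertex lies in \emph{every} maximum independent set and certainly has no neighbor in it. Also, the corollary as stated does not assume $G$ itself has no isolated vertices, so your appeal to Lemma~\ref{W2} already adds a hypothesis. Two clean ways to repair this: (a) reduce to the case where $G$ has no isolated vertices (isolated vertices contribute only polynomial variables to $S/I(G)$ and hence do not affect Gorensteinness), and then observe that an Euler complex can have no cone point---since a cone has reduced Euler characteristic $0$---so $\Delta(\b{G_F})$, being Euler as a link of the Euler complex $\Delta(\b G)$, has no cone point, i.e.\ $G_F$ has no isolated vertex; or (b) bypass Lemma~\ref{Euler} entirely and quote the general simplicial-complex fact that links in a Gorenstein complex are Gorenstein (which is what \cite[Theorem 5.5.2]{CM ring} gives once you combine ``links of CM are CM'' with ``links of Euler are Euler''), applied to $\link_{\Delta(\b G)}(F)=\Delta(\b{G_F})$. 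Either route closes the gap; the rest of your argument is fine.
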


                    \section{Trung's construction and the Charney-Davis conjecture}

In \cite{pinter1} a method for constructing a W$_2$ graph from another W$_2$ graph is presented and it is shown that
all planar W$_2$ graphs with girth 4 are constructed by successively applying this method on a certain graph on 8
vertices. In \cite{large girth}, it is proved that all such graphs are indeed Gorenstein. Recently Trung has
generalized this construction, see \cite{planar goren}, and showed that this generalized construction preserves the
Gorenstein property. We recall this generalized construction (see \cite[Proposition 3.9]{planar goren}).

\begin{defn}
Suppose that $H$ is a graph and $v$ is a non-isolated vertex of $H$. Let $a$, $b$ and $c$ be three new vertices. Join
$c$ to $b$ and to every neighbor of $v$; join $b$ to $a$; and join $a$ to $v$. We denote the obtained graph by
$\T(H,v)$ and call this construction ``Trung's construction''.
\end{defn}

This construction is illustrated in \ref{fig-Tr}. Here we show that many properties of the independence complex of a
graph, is preserved by Trung's construction.

\begin{figure}[!ht]
\begin{center}
\includegraphics{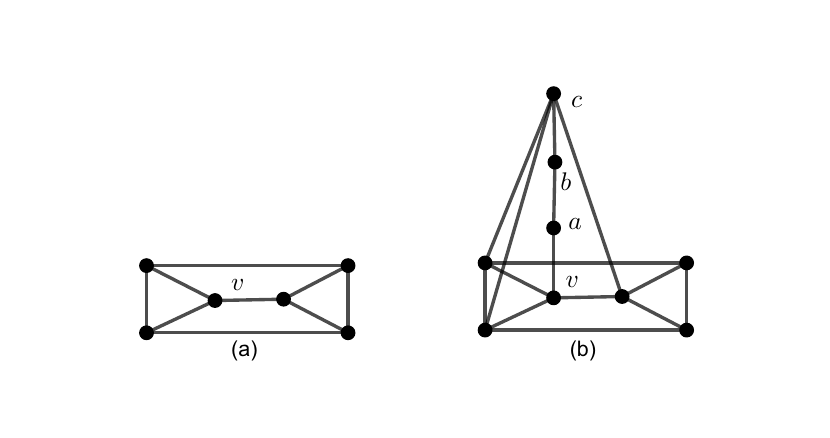}
\end{center}
\caption{(a) A graph $H$; (b) $\T(H,v)$} \label{fig-Tr}

\end{figure}
\begin{thm}\label{Tr goren}
Let $H$ be a graph and $v$ a non-isolated vertex of $H$. If $G=\T(H,v)$, then
\begin{enumerate}
\item \label{Tr Goren1} $\alpha(G)=\alpha(H)+1$;
\item \label{Tr Goren2} $G$ is Gorenstein (over $K$) \ifof $H$ is Gorenstein (over $K$).
\end{enumerate}
\end{thm}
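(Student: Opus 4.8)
The plan is to analyze the independence structure of $G=\T(H,v)$ directly in terms of that of $H$, using the four new vertices $a,b,c$ together with the fact that $c$ is joined to $b$ and to $\N_H(v)$, $b$ is joined to $a$ and $c$, and $a$ is joined to $v$ and $b$. For part \pref{Tr Goren1}, I would first observe that $\{a,c\}$ is independent in $G$ and disjoint from $\V(H)$, so any maximum independent set $J$ of $H$ gives an independent set $J\cup\{a,c\}$ of $G$ provided $a$ and $c$ have no neighbors in $J$; since $a$'s only neighbor in $\V(H)$ is $v$ and $c$'s neighbors in $\V(H)$ are $\N_H(v)$, one must be slightly careful, but taking $J$ to be a maximum independent set of $H$ and noting that either $v\in J$ (then use $\{a\}\cup J$ and add $c$? — no, $c\sim\N_H(v)$ is fine since $\N_H(v)\cap J=\emptyset$ when $v\in J$, so $J\cup\{a,c\}$ works once we check $a\sim v$: it doesn't work) — so instead I take a maximum independent set $J$ of $H$ with $v\notin J$ when possible, or more cleanly: $\{b\}\cup(J\setminus\N_H[?])$. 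The cleanest route is a case analysis on membership of $v$: if $v\in J$ then $J\setminus\{v\}$ is independent in $H$ and $(J\setminus\{v\})\cup\{a,c\}$ or $(J\setminus\{v\})\cup\{v\}\cup\{?\}$… I will organize this as: $\alpha(G)\ge\alpha(H)+1$ by exhibiting, from any maximum independent set of $H$, an independent set of $G$ of size $\alpha(H)+1$ (splitting on whether $v$ is in it); and $\alpha(G)\le\alpha(H)+1$ by showing any independent set $I$ of $G$ contains at most two of $\{a,b,c\}$ and, after removing those and possibly $v$, restricts to an independent set of $H$ of size $\ge|I|-2$, with the extremal cases forcing equality. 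The key combinatorial fact driving everything is that among $\{a,b,c\}$ the pairs $ab$ and $bc$ are edges, so an independent set meets $\{a,b,c\}$ in a subset of $\{a,c\}$ or $\{b\}$.

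For part \pref{Tr Goren2}, I would use Lemma \ref{Euler}\pref{goren=>Euler}: $G$ is Gorenstein over $K$ iff $G$ is CM over $K$ and has an Eulerian independence complex. The Cohen--Macaulay equivalence ``$G$ CM $\iff$ $H$ CM'' should follow either from Trung's original result in \cite{planar goren} (which establishes the Gorenstein equivalence, hence in particular we may cite it) or, more self-containedly, by identifying $\Delta(\b G)$ up to the combinatorial operations (cone/suspension-like moves and vertex decomposability) that preserve CM-ness; I would compute $\link_{\Delta(\b G)}(a)$, $\link_{\Delta(\b G)}(c)$, and the deletion, and exhibit $\Delta(\b G)$ as vertex decomposable over $\Delta(\b H)$ when the latter is, using the machinery referenced via \cite{my vdec, our chordal}. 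The heart of the matter is the Eulerian condition: I must show $I(G_F,-1)=(-1)^{\alpha(G_F)}$ for every independent set $F$ of $G$, given the same for $H$. Here I would split on how $F$ meets $\{a,b,c,v\}$ and reduce each $G_F$ to a graph built from some $H_{F'}$ by (possibly) another Trung move or by adding/removing pendant-type vertices, then use the independence-polynomial formula for Trung's construction that the paper announces it will prove, evaluated at $-1$. Concretely, for $F\subseteq\V(H)$ not containing $v$ and with $v\notin\N_H[F]$, $G_F=\T(H_F,v)$, so a recursive formula for $I(\T(H,v),x)$ in terms of $I(H,x)$, $I(H_v,x)$ (and perhaps $I(H\sm v,x)$) evaluated at $x=-1$ finishes it; the remaining cases ($v\in F$, or $a,b,c\in F$, or a neighbor of $v$ in $F$) each collapse $G_F$ to something of the form $H_{F'}$ with a few isolated-free pendant edges attached, whose behavior at $-1$ is controlled.

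The main obstacle I anticipate is bookkeeping in the Eulerian verification: there are several overlapping cases for $F$, and in each one must correctly identify the resulting graph $G_F$ and track the parity of $\alpha(G_F)$ against the sign of $I(G_F,-1)$. In particular the ``degenerate'' cases where removing $\N[F]$ deletes $v$ or one of $a,b,c$ prematurely need care, and the base identity $I(\T(H,v),-1)=(-1)^{\alpha(H)+1}$ must be derived cleanly — most likely via the deletion--contraction-style identity $I(G,x)=I(G\sm c,x)+x\,I(G_c,x)$ applied to $c$, then to $a$, reducing to $I(H,x)$ and $I(H_v,x)$, and invoking that $H$ (hence $H_v$, by Corollary \ref{G_F goren}) has an Eulerian independence complex so both evaluate predictably at $-1$. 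I expect the CM part to be comparatively routine (or simply quotable), so the proof will lead with \pref{Tr Goren1}, then dispatch CM-ness, and devote the bulk of the argument to the Eulerian identity that upgrades CM to Gorenstein.
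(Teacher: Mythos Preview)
Your plan is workable in principle, but it takes a much longer route than the paper and misses the one observation that makes part \pref{Tr Goren2} nearly immediate. The paper's proof is two lines: for $H$ Gorenstein $\Rightarrow$ $G$ Gorenstein it simply cites Trung \cite[Proposition~3.9]{planar goren}; for $G$ Gorenstein $\Rightarrow$ $H$ Gorenstein it notes that $\N_G[b]=\{a,b,c\}$, so $G_b=H$, and then Corollary~\ref{G_F goren} (links in a Gorenstein complex are Gorenstein) finishes it instantly. You never mention the identity $H=G_b$, and that is precisely the key lemma here.

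By contrast, you propose to unpack Gorenstein as ``CM $+$ Eulerian'' via Lemma~\ref{Euler}\pref{goren=>Euler} and then run a case analysis on how an independent set $F$ of $G$ meets $\{a,b,c,v\}$, reducing each $G_F$ to some $H_{F'}$ or a Trung construction thereof, and tracking signs of $I(\cdot,-1)$. That can be made to work (you would also need the well-covered equivalence, i.e.\ Theorem~\ref{Tr well}, and the independence-polynomial formula of Theorem~\ref{Tr-ind poly} evaluated at $-1$), but it is a great deal of bookkeeping to re-derive what the link observation gives for free. Moreover, you yourself note that Trung's result already establishes the Gorenstein implication, so extracting only the CM part from it and then redoing the Eulerian half by hand is redundant. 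For part~\pref{Tr Goren1} your case split on $v\in J$ versus $v\notin J$ is the right idea; the paper just declares it clear.
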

\begin{proof}
\pref{Tr Goren1} is clear. \pref{Tr Goren2}: (\rgive) \cite[Proposition 3.9]{planar goren}; (\give)  Noting that
$H=G_b$, this follows from \pref{G_F goren}.
\end{proof}

\begin{thm}\label{Tr well}
Let $H$ be a graph and $v$ a non-isolated vertex of $H$. Then $G= \T(H,v)$ is well-covered \ifof $H$ is so.
\end{thm}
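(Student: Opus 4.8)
The plan is to obtain an explicit description of the maximal independent sets of $G=\T(H,v)$ in terms of the maximal independent sets of $H$ and of $H_v$, and then to read off the well-coveredness condition. Write $\V(G)=\V(H)\cup\{a,b,c\}$ as in the definition. First I would analyse, for a maximal independent set $I$ of $G$, the intersection $I\cap\{a,b,c\}$. Since $ab,bc,av\in\E(G)$ while $ac\notin\E(G)$, the only independent subsets of $\{a,b,c\}$ in $G$ are $\emptyset$, $\{a\}$, $\{b\}$, $\{c\}$ and $\{a,c\}$; and $I\cap\{a,b,c\}=\emptyset$ is impossible for a \emph{maximal} $I$, since $b$ has no neighbour in $\V(H)$ and could always be added to $I$. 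This leaves exactly four cases.

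Next, writing $J=I\cap\V(H)$ and using the adjacencies of $a$, $b$, $c$, I would verify that the maximal independent sets of $G$ are precisely the sets of the following four forms:
\begin{enumerate}
\item $I=\{b\}\cup J$, where $J$ is an arbitrary maximal independent set of $H$;
\item $I=\{a,c\}\cup J$, where $J$ is a maximal independent set of $H_v$ (here $a$ forbids $v$ and $c$ forbids every neighbour of $v$, so $J\se\V(H)\sm\N[v]=\V(H_v)$);
\item $I=\{a\}\cup J$, where $J$ is a maximal independent set of $H$ with $v\notin J$;
\item $I=\{c\}\cup J$, where $J$ is a maximal independent set of $H$ with $v\in J$.
\end{enumerate}
In forms (i), (iii) and (iv) one has $|I|=|J|+1$, whereas in form (ii) one has $|I|=|J|+2$. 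Since $\alpha(G)=\alpha(H)+1$ by Theorem~\ref{Tr goren}\pref{Tr Goren1}, $G$ is well-covered if and only if every set in (i)--(iv) has size $\alpha(H)+1$.

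The implication ``$G$ well-covered $\Rightarrow$ $H$ well-covered'' is then immediate from form (i): if $J$ is a maximal independent set of $H$, then $\{b\}\cup J$ is a maximal independent set of $G$, so $|J|+1=\alpha(G)=\alpha(H)+1$ and $|J|=\alpha(H)$. For the converse, assume $H$ well-covered. Forms (i), (iii), (iv) give $|I|=\alpha(H)+1$ at once. Form (ii) needs one more observation: if $J$ is a maximal independent set of $H_v$, then $J\cup\{v\}$ is a maximal independent set of $H$, since any vertex outside $J\cup\{v\}$ is either a neighbour of $v$ (hence adjacent to $v$) or a vertex of $H_v$ (hence adjacent to some element of $J$, by maximality of $J$ in $H_v$); well-coveredness of $H$ then forces $|J|+1=\alpha(H)$, so $|I|=|J|+2=\alpha(H)+1$. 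Hence all maximal independent sets of $G$ have size $\alpha(G)$, i.e. $G$ is well-covered.

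I expect the only real work to be the bookkeeping behind the classification (i)--(iv): in each case one must check that the displayed condition on $J$ is \emph{exactly} equivalent to maximality of $I$ in $G$. For example, when $I\cap\{a,b,c\}=\{a\}$, maximality of $I$ both forbids $v\in J$ and forbids adding $c$, the latter meaning that $J$ must contain a neighbour of $v$; conversely, a maximal independent set $J$ of $H$ with $v\notin J$ automatically contains a neighbour of $v$ and therefore blocks $c$. Apart from such verifications and the observation in form (ii) linking maximal independent sets of $H_v$ to those of $H$, the only extra ingredient needed is the identity $\alpha(G)=\alpha(H)+1$ recorded in Theorem~\ref{Tr goren}.
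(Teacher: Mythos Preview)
Your proposal is correct and follows essentially the same approach as the paper: both arguments classify the maximal independent sets of $G$ according to $I\cap\{a,b,c\}$ and reduce each case to a maximal independent set of $H$ of size $|I|-1$ (in your form~(ii) via the observation that $J\cup\{v\}$ is maximal in $H$, which is exactly the paper's move $(F\cap\V(H))\cup\{v\}$). Your explicit four-form description coincides with the characterization the paper records in the remark immediately following the theorem; the only cosmetic difference is that you parametrize form~(ii) by maximal independent sets of $H_v$, whereas the paper parametrizes it by maximal independent sets of $H$ containing $v$, and you invoke $\alpha(G)=\alpha(H)+1$ while the paper reads off the size correspondence directly.
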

\begin{proof}
Let $F$ be a maximal independent set of $G$. We show that there is a maximal independent set of $H$ with $|F|-1$
vertices. Note that $|F\cap\{a,b,c\}|$ equals 1 or 2. In the latter case, $F\cap\{a,b,c\}=\{a,c\}$ and $(F\cap \V(H))
\cup\{v\}$ is a maximal independent set of $H$. Now suppose that $F\cap\{a,b,c\}=\{a\}$.  If $F\sm \{a\}$ is not a
maximal independent set of $H$, then $\{x\}\cup F\sm \{a\}$ is an independent set of $H$ for some $x\in \V(H)$. Since
$F\cup\{x\}$ is not independent in $G$, $x$ is adjacent to $a$, that is, $x=v$. This means that $N_H(v)\cap F=\tohi$.
Thus $F\cup\{c\}$ is an independent set of $G$ larger than $F$, a contradiction. Thus $F\sm\{a\}$ is a maximal
independent set of $H$. Similarly, in other cases that $|F\cap\{a,b,c\}|=1$, one can conclude that $F\cap \V(H)$ is a
maximal independent set of $H$. Consequently, cardinality of each maximal independent sets of $G$ is exactly one more
than the cardinality of some maximal independent set of $H$. Conversely each maximal independent set of $H$ can be
extended to a maximal independent set of $G$ with exactly one more vertex. From this the result follows.
\end{proof}

\begin{rem}\label{ind of Tr}
The argument in the proof of \pref{Tr well} shows that maximal independent sets of $\T(H,v)$ are exactly the sets of
the form $A\cup\{a\}$, $A\cup\{b\}$, $B\cup\{c\}$, $B\cup\{b\}$ or $B\cup \{a,c\}\sm \{v\}$, where $A, B$ are maximal
independent sets of $H$ with $v\in B\sm A$.
\end{rem}

\begin{thm}\label{Tr W2}
Let $H$ be a graph and $v$ a non-isolated vertex of $H$. Then $G= \T(H,v)$ is W$_2$ \ifof $H$ is so.
\end{thm}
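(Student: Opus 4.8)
The plan is to prove both implications by a direct analysis of pairs of disjoint independent sets, using the explicit description of maximal independent sets of $G=\T(H,v)$ from Remark \ref{ind of Tr}, together with Theorem \ref{Tr well} (which gives that $G$ is well-covered iff $H$ is) and part \pref{Tr Goren1} of Theorem \ref{Tr goren} (so $\alpha(G)=\alpha(H)+1$, and a maximum independent set of $G$ has size $\alpha(H)+1$). Throughout, I write $N=N_H(v)$ for the neighborhood of $v$ in $H$; recall that in $G$ the vertex $c$ is adjacent to $b$ and to every vertex of $N$, the vertex $b$ is adjacent to $a$ and $c$, and $a$ is adjacent to $b$ and $v$.

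First I would prove the forward direction (\give): assume $G$ is W$_2$ and show $H$ is W$_2$. Given disjoint independent sets $F_1,F_2$ of $H$, consider them as independent sets of $G$. The subtlety is that $F_1,F_2$ need not both be extendable by the \emph{same} auxiliary vertex while staying disjoint, so I would instead add a carefully chosen auxiliary vertex to each: for instance, since $F_1$ is independent in $G$, the set $F_1\cup\{b\}$ is still independent in $G$ unless — well, $b$ is never adjacent to a vertex of $H$, so $F_1\cup\{b\}$ is always independent in $G$; similarly $F_2\cup\{b\}$ is independent in $G$, but these are not disjoint. The honest move is: enlarge $F_1$ to $F_1'=F_1\cup\{b\}$ and $F_2$ to $F_2'=F_2\cup\{c\}$ (legal when $F_2\cap N=\tohi$) or to $F_2'=F_2\cup\{a\}$ (legal when $v\notin F_2$); since $F_1,F_2$ are disjoint independent sets of $H$, at most one of them contains $v$ and at most one meets $N$ in a way that blocks $c$, so one can always choose auxiliary labels from $\{a,b,c\}$ making $F_1',F_2'$ disjoint independent sets of $G$. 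Applying the W$_2$ property of $G$, we get disjoint maximum independent sets $M_1\supseteq F_1'$, $M_2\supseteq F_2'$ of $G$; by Remark \ref{ind of Tr} each $M_i$ has the form (a maximal independent set of $H$) plus one or (via $B\cup\{a,c\}\sm\{v\}$) "two" auxiliary symbols, and since $G$ is well-covered these maximal sets of $H$ are maximum, of size $\alpha(H)$. Deleting the auxiliary vertices (and re-inserting $v$ in the $B\cup\{a,c\}\sm\{v\}$ case) recovers maximum independent sets of $H$ containing $F_1$ and $F_2$ respectively, and I must check they are still disjoint — the only possible collision is at $v$, which is resolved because $v$ lies in the $H$-part of at most one of $M_1,M_2$.

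Next I would prove the reverse direction (\rgive): assume $H$ is W$_2$ and show $G$ is W$_2$. Let $D_1,D_2$ be disjoint independent sets of $G$. Intersect with $\{a,b,c\}$: since $D_1,D_2$ are disjoint, the multiset of auxiliary vertices they use is constrained (e.g. they cannot both use $b$), and I would do a finite case analysis on $(D_1\cap\{a,b,c\},\,D_2\cap\{a,b,c\})$ — up to symmetry there are only a handful of cases, the nontrivial ingredient being that $\{a,c\}$ is independent but $\{a,b\},\{b,c\}$ are not. In each case, set $F_i=D_i\cap\V(H)$; these are disjoint independent sets of $H$ (forced also to avoid $v$ when the corresponding $D_i$ contains $a$, and to avoid $N$ when it contains $c$). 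Using that $H$ is W$_2$, extend $F_1,F_2$ to disjoint maximum independent sets $M_1,M_2$ of $H$; a standard point here is that by a swapping/augmentation argument (or by the definition of W$_2$ applied to the enlarged pairs $F_1\cup(N\cap F_2$-avoiding choices$)$ etc.) one can further arrange that $v\in M_i$ exactly when we need it, i.e. when $D_i$ used $c$ we want $v\notin M_i$, and when $D_i$ used $a$ we want $v\notin M_i$, and otherwise we are free — one of $M_1,M_2$ can be taken to contain $v$. Finally, append auxiliary vertices to $M_1,M_2$ according to Remark \ref{ind of Tr} (e.g. $M_i\cup\{a\}$, or $(M_i\cup\{a,c\})\sm\{v\}$ when $v\in M_i$, etc.) to reach disjoint maximum independent sets $M_1',M_2'$ of $G$ of size $\alpha(H)+1=\alpha(G)$ with $D_i\subseteq M_i'$; also note $|\V(G)|\ge 2$ trivially. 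The disjointness of $M_1',M_2'$ is where the case analysis pays off: the $H$-parts are disjoint, and the auxiliary parts can always be chosen disjoint because $D_1,D_2$ did not both claim $b$, and $a$ versus $c$ never clash.

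The main obstacle is the bookkeeping around the vertex $v$ and the exceptional maximal set $B\cup\{a,c\}\sm\{v\}$: whenever one of the two independent sets "wants" $v$ on the $H$-side while the other "wants" the label $c$ (which forbids neighbors of $v$ but not $v$ itself) or the label $a$ (which forbids $v$), one must verify that the W$_2$ property of $H$ supplies maximum independent sets of $H$ with the correct membership of $v$. I expect this to follow by applying the W$_2$ hypothesis for $H$ not just to $F_1,F_2$ but to slightly enlarged disjoint pairs such as $F_1$ and $F_2\cup\{v\}$ (when legal), thereby forcing $v$ into the second maximum set, and by a clean-up argument for disjointness. Apart from this, every step is routine set-chasing, and the whole proof is a finite case check organized by $D_i\cap\{a,b,c\}$.
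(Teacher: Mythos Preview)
Your outline follows the same overall case-by-case strategy as the paper, but both directions contain a genuine gap where the ``routine bookkeeping'' around $v$ actually fails. In the forward direction, your resolution of the $v$-collision is incorrect. Suppose $b\in M_1$, $v\in M_1$, and $M_2$ has the exceptional form $B'\cup\{a,c\}\sm\{v\}$; this can occur regardless of how you choose the auxiliary labels for $F_1',F_2'$. Then $M_1\cap\V(H)$ is a maximum independent set of $H$ containing $v$, while re-inserting $v$ into $M_2\cap\V(H)$ produces $B'$, which \emph{also} contains $v$---so the two recovered sets are not disjoint. Your sentence ``$v$ lies in the $H$-part of at most one of $M_1,M_2$'' is true before re-insertion but says nothing after it. The paper handles exactly this situation by a \emph{second} application of the W$_2$ property of $G$: one moves $c$ from the $\{a,c\}$-side to the $b$-side, re-extends, and obtains a new vertex $u\in\V(H)\sm B''$ that replaces $v$ on the first side. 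This swap is a missing idea in your plan.

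In the reverse direction, the case $c\in D_1$ and $v\in D_2$ cannot be closed by applying W$_2$ of $H$ to ``slightly enlarged pairs such as $F_1$ and $F_2\cup\{v\}$''. Here $v$ already lies in $F_2$, and since $c\in M_1'$ while $v\in M_2'$, the only admissible form for $M_1'$ is $B_1\cup\{a,c\}\sm\{v\}$ and for $M_2'$ is $B_2\cup\{b\}$, with $v\in B_1\cap B_2$. Unwinding, one needs disjoint maximum independent sets of $H_v$ extending $D_1\cap\V(H)$ and $(D_2\cap\V(H))\sm\{v\}$; that is, one needs $H_v$ itself to be W$_2$. You cannot force $v$ into both sides of a disjoint pair using W$_2$ of $H$, so this does not follow from your enlargement trick. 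The paper explicitly invokes Pinter's theorem (\cite[Theorem 3]{pinter2}) that $H_v$ is W$_2$ with $\alpha(H_v)=\alpha(H)-1$ whenever $H$ is W$_2$; without this extra ingredient the case analysis does not go through.
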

\begin{proof}
(\give): Suppose that $A, B\in \Delta(\b H)$ are disjoint. We have to find disjoint $A', B'\in \Delta(\b H)$ such that
$|A'|=|B'|=\alpha(H)$, $A\se A'$ and $B\se B'$. As $G$ is W$_2$, there are disjoint independent sets $A'', B''$ of $G$
with size $\alpha(H)+1$ such that $A\se A''$ and $B\se B''$. If $|A''\cap \V(H)|= |B''\cap \V(H)|= \alpha(H)$, then we
are done. Thus according to the above remark, we can assume that $b\in B''$ and $a,c\in A''$. So $N_H(v)\cap
A''=\tohi$. If $v\notin B''$, then $A'=A''\cup\{v\}\sm \{a,c\}$ and $B'=B''\sm \{b\}$ have the required properties.
Hence we assume that $v\in B''$.

Set $A'''=A''\sm \{c\}$ and $B'''= B''\cup \{c\}\sm \{b\}$. Since $G$ is W$_2$, we can extend $A'''$ and $B'''$ to
disjoint maximum size independent sets of $G$. Equivalently, there is a $u\in \V(G)\sm (B'''\cup A''')$ such that
$A'''\cup\{u\}$ is independent. Note that $u\neq a,b,c$ and hence $u\in \V(H)\sm B''$. Therefore, $A'=A''\cup
\{u\}\sm\{a,c\}$ and $B'=B''\sm\{b\}$ are disjoint maximum size independent sets of $H$ containing $A$ and $B$
respectively, as required.

(\rgive): Let $A, B$ be disjoint independent sets of $G$. We must find disjoint maximum size independent sets $A', B'$
of $G$ such that $A\se A', B\se B'$. We consider several cases:

\subparagraph{\emph{Case 1:}} $c\notin A$ and $v\in B$. Then $A_0=A\cap H$ and $B_0=B\cap H$ are disjoint independents
sets of $H$ and we can extend them to disjoint maximal independent sets $A'_0$ and $B'_0$ of $H$, respectively. If
$A\cap \{a,b\}$ is nonempty, then let $E=A\cap \{a,b\}$ and if $A\cap \{a,b\}= \tohi$, let $E=\{a\}$. Now set
$A'=A'_0\cup E$. If $b\in B$, let $B'=B'_0\cup \{b\}$, else let $B'=B'_0\cup \{c\}$. One can readily check that $A'$
and $ B'$ satisfy the required conditions.

\subparagraph{\emph{Case 2:}} $c\in A$ and $v\in B$. Let $A_0=A \cap H_v$ and $B_0=B\cap H_v$. Note that by
\cite[Theorem 3]{pinter2}, $H_v$ is W$_2$ with $\alpha(H_v)=\alpha(H)-1$. Suppose that $A'_0$ and $B'_0$ are disjoint
maximal independent extensions of $A_0$ and $B_0$ in $H_v$. Now $A'=A'_0\cup\{c,a\}$ and $B'=B'_0\cup\{v,b\}$ are
disjoint maximum size independent sets of $G$. Since $c\in A$ and $v\in B$, we have $A\cap N_H(v)=\tohi$ and $B\cap
N_H(v)= \tohi$. Therefore, $A\cap H=A_0$ and $B\cap H= B_0\cup\{v\}$ and it follows that $A\se A'$ and $B\se B'$, as
required.

Note that if $v\in A$, then by changing the names of $A$ and $B$, case 1 or 2 occurs. So we can assume that
$v\notin A\cup B$.

\subparagraph{\emph{Case 3:}} $v\notin A\cup B$ and $a,c\in A$. Let $A_0=(A \cap H)\cup \{v\}$ ($A_0$ is independent
because $c\in A$ and hence $\N_H(v)\cap A=\tohi$)  and $B_0=B\cap H$ and extend them to disjoint maximum independent
sets $A'_0$ and $B'_0$ of $H$. Now $A'=(A'_0\sm \{v\}) \cup \{a,c\}$ and $B'=B'_0 \cup \{b\}$ have the required
properties.

\subparagraph{\emph{Case 4:}} $v\notin A\cup B$, $c\in A$ and $a\notin A$. Let $A_0=(A \cap H)\cup \{v\}$ and
$B_0=B\cap H$ and extend them to disjoint maximum independent sets $A'_0$ and $B'_0$ of $H$. Set $A'=A'_0\cup\{c\}$. If
$b\in B$, set $B'=B'_0 \cup \{b\}$ and if $b\notin B$ set $B'=B'_0\cup\{a\}$.

\subparagraph{\emph{Case 5:}} $v,c\notin A\cup B$ and $a\in A$. Let $A_0=A \cap H$ and $B_0=B\cap H$ and extend them to
disjoint maximum independent sets $A'_0$ and $B'_0$ of $H$. Set $B'=B'_0\cup\{b\}$ and if $v\in A'_0$, set $A'=(A'_0\sm
\{v\})\cup \{a,c\}$, else set $A'=A'_0\cup\{a\}$.

\subparagraph{\emph{Case 6:}} $v,a,c\notin A\cup B$ and $b\in B$.  Let $A_0=A$ and $B_0=B\cap H$ and extend them to
disjoint maximum independent sets $A'_0$ and $B'_0$ of $H$. Set $B'=B'_0 \cup\{b\}$ and if $v\in A'_0$, let
$A'=A'_0\cup\{c\}$ and if $v\notin A'_0$, set $A'= A'_0\cup\{a\}$.

\subparagraph{\emph{Case 7:}} $v,a,b,c\notin A\cup B$. Let $A'_0$ and $B'_0$ be disjoint maximum size independent sets
of $H$ containing $A$ and $B$, respectively. Then $v$ is not in at least one of $A'_0$ or $B'_0$, say $v\notin A'_0$.
Then $A'=A'_0\cup \{a\}$ and $B'= B'_0\cup\{b\}$ have the required properties.
\end{proof}

In the next theorem we present a formula for computing the independence polynomial of $\T(H,v)$ in terms of
independence polynomials of $H$ and $H_v$.

\begin{thm}\label{Tr-ind poly}
Let $H$ be a graph and $v$ be a non-isolated vertex of $H$. Then
$$I(\T(H,v),x)=(2x+1)I(H,x)+(x+x^2)I(H_v,x).$$
\end{thm}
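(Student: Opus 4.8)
The plan is to count independent sets of $G=\T(H,v)$ by partitioning them according to how they meet the three new vertices $\{a,b,c\}$. Recall that the edges among the new vertices and to $H$ are: $a$ is adjacent to $v$ and to $b$; $b$ is adjacent to $a$ and $c$; and $c$ is adjacent to $b$ and to every vertex of $\N_H(v)$. In particular $a$ and $c$ are non-adjacent, and neither $a$ nor $b$ has any neighbour in $\V(H)$ except through $v$ (only $a$ sees $v$, while $b$ sees no vertex of $H$). So an independent set $F$ of $G$ is determined by $F\cap\{a,b,c\}$ together with $F\cap\V(H)$, subject to compatibility constraints.

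First I would enumerate the possibilities for $T:=F\cap\{a,b,c\}$. Since $\{a,b\},\{b,c\}\in\E(G)$, the only independent subsets of $\{a,b,c\}$ are $\tohi$, $\{a\}$, $\{b\}$, $\{c\}$, and $\{a,c\}$. For each such $T$, the set $F\cap\V(H)=:W$ must be an independent set of $H$, and the extra conditions are: if $a\in T$ then $v\notin W$; if $c\in T$ then $\N_H(v)\cap W=\tohi$, i.e. $W$ is an independent set of $H$ avoiding the closed neighbourhood of $v$ except possibly $v$ itself — more precisely $W\se\V(H)\sm\N_H(v)$, so $W$ is an independent set of $H$ that is either an independent set of $H_v$ or $\{v\}$ together with such a set (since $v\notin\N_H(v)$, an independent set of $H$ contained in $\V(H)\sm\N_H(v)$ is precisely $W_0$ or $W_0\cup\{v\}$ with $W_0$ independent in $H_v$). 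I would then record the generating function (by cardinality, with the $\{a,b,c\}$-part contributing its size) for each of the five cases:
for $T=\tohi$: contributes $I(H,x)$;
for $T=\{a\}$: $W$ independent in $H$ with $v\notin W$, contributes $x\bigl(I(H,x)-xI(H_v,x)\bigr)$, using that independent sets of $H$ containing $v$ biject with independent sets of $H_v$;
for $T=\{b\}$: no constraint on $W$, contributes $xI(H,x)$;
for $T=\{c\}$: $W\se\V(H)\sm\N_H(v)$, so $W=W_0$ or $W_0\cup\{v\}$ with $W_0$ independent in $H_v$, contributing $x(1+x)I(H_v,x)$;
for $T=\{a,c\}$: same as the $\{c\}$ case but additionally $v\notin W$, so $W=W_0$ with $W_0$ independent in $H_v$, contributing $x^2 I(H_v,x)$.

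Summing these five contributions gives $I(\T(H,v),x)=\bigl(1+x+x+0\bigr)I(H,x)+\bigl(-x^2+x+x^2+x^2\bigr)I(H_v,x)=(2x+1)I(H,x)+(x+x^2)I(H_v,x)$, which is the claimed formula. The only genuinely delicate point — and the step I would state most carefully — is the bookkeeping for the cases involving $c$: one must verify that requiring $\N_H(v)\cap W=\tohi$ for an independent set $W$ of $H$ is equivalent to $W$ being an independent set of $H_v$ possibly enlarged by $v$, and must be careful that $v$ itself is allowed (it is, since $v\notin\N_H(v)$, and no new vertex forces its exclusion except $a$). The standard bijection "independent sets of $H$ containing $v$ $\leftrightarrow$ independent sets of $H_v=H\sm\N[v]$" (via $W\mapsto W\sm\{v\}$) is what lets me write $I(H,x)-xI(H_v,x)$ for the independent sets of $H$ avoiding $v$, and everything else is a routine collection of coefficients. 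No single algebraic identity is hard here; the care is entirely in the case analysis and in not double-counting or mis-constraining the $W$-part.
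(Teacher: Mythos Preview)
Your proof is correct and follows essentially the same approach as the paper: both partition the independent sets of $\T(H,v)$ according to their intersection with $\{a,b,c\}$, analyze the same five cases, and collect terms. The only cosmetic difference is that the paper counts independent sets of a fixed size (using $a_{i-1}(H-v)$ for the $\{a\}$ case and then the identity $a_{i-1}(H-v)+a_{i-2}(H_v)=a_{i-1}(H)$), while you work directly with generating functions and write the same thing as $x\bigl(I(H,x)-xI(H_v,x)\bigr)$.
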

\begin{proof}
Throughout the proof, $F$ always denotes an independent set of $G=\T(H,v)$ with $|F|=i$. We denote $F\cap\{a,b,c\}$ by
$F_0$. Also for any graph $\Gamma$ by $a_i(\Gamma)$ we mean the number of independent sets of $\Gamma$ with cardinality
$i$. If $i<0$, we set $a_i(\Gamma)=0$. Note that $F_0=\tohi$ \ifof $F$ is an independent set of $H$ with size $i$. Thus
there are $a_i(H)$ such $F$'s. Also $F_0=\{a\}$ \ifof $F=F_1\cup\{a\}$ for an independent set $F_1$ of $H-v$ with
$|F_1|=i-1$. Thus there are $a_{i-1}(H-v)$ choices of $F$ with $F_0=\{a\}$. Similarly, there are $a_{i-1}(H)$
 choices of $F$ with $F_0=\{b\}$.

Now assume that $F_0=\{c\}$. If $v\in F$, then $F\sm \{v\}$ is an independent set of $H_v$  with cardinality $i-2$
and  conversely by adding $v$ and $c$ to any such independent set of $H_v$, we get an $F$ with $F_0=\{c\}$ and
$v\in F$. Similarly, those $F$ with $F_0=\{c\}$ and $v\notin F$ correspond to the independent sets of $H_v$ with
size $i-1$ (note that as $c\in F$, we have $N_H(v)\cap F=\tohi$). Therefore, there are totally
$a_{i-1}(H_v)+a_{i-2}(H_v)$ choices for $F$ with $F_0=\{c\}$.

Finally, if $F_0=\{a,c\}$, then $F\cap \N_{H}[v]=\tohi$ and hence $F\cap H\se H_v$. Consequently, there is a
one-to-one correspondence between those $F$ with $F_0=\{a,c\}$ and independent sets of $H_v$ with size $i-2$. So
there are $a_{i-2}(H_v)$ choices for $F$ with $F_0=\{a,c\}$.

Totally, we get that $a_i(G)=a_i(H)+a_{i-1}(H-v)+a_{i-1}(H)+a_{i-1}(H_v)+2a_{i-2}(H_v)$. Note that
$a_{i-1}(H-v)+a_{i-2}(H_v)= a_{i-1}(H)$, because $a_{i-1}(H-v)$ is number of independent sets of $H$ with
cardinality $i-1$ which do not contain $v$ and $a_{i-2}(H_v)$ is the number of independent sets of $H$ with size
$i-1$ which contain $v$. We conclude that
$$a_i(G)=a_i(H)+ 2a_{i-1}(H) +a_{i-1}(H_v)+a_{i-2}(H_v).$$
Multiplying by $x^i$ and taking summation over $i=0, \ldots, \alpha(G)$ we get the desired equation.
\end{proof}

\begin{cor}\label{Tr charney}
Let $H$ be a graph without isolated vertices which is Gorenstein over $\Q$ such that $\alpha(H)$ is odd and assume that
$v \in\V(H)$. Then $G=\T(H,v)$ satisfies the Charney-Davis conjecture \ifof $H_v$ does so.
\end{cor}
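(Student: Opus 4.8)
The plan is to substitute $x=-\tfrac12$ into the independence‐polynomial identity of Theorem \ref{Tr-ind poly} and then chase signs, using Lemma \ref{W2} to control $\alpha(H_v)$.

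First I would record what ``$G$ satisfies the Charney--Davis conjecture'' amounts to here. By Theorem \ref{Tr goren}\pref{Tr Goren2} the graph $G=\T(H,v)$ is Gorenstein over $\Q$; it has no isolated vertices (immediate from the construction, since $v$ is non-isolated in $H$ and $H$ has no isolated vertices); and $\alpha(G)=\alpha(H)+1$ is even because $\alpha(H)$ is odd. So Conjecture \ref{char conj} for $G$ is precisely the assertion $(-1)^{\alpha(G)/2}I(G,-\tfrac12)\geq 0$. Next, since $H$ is Gorenstein over $\Q$ without isolated vertices, Lemma \ref{W2} gives that $H$ is a W$_2$ graph, and then \cite[Theorem 3]{pinter2} yields $\alpha(H_v)=\alpha(H)-1$; in particular $\alpha(H_v)$ is even, so the Charney--Davis inequality for $H_v$, namely $(-1)^{\alpha(H_v)/2}I(H_v,-\tfrac12)\geq 0$, is likewise a meaningful statement.

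Now I would evaluate $I(\T(H,v),x)=(2x+1)I(H,x)+(x+x^2)I(H_v,x)$ at $x=-\tfrac12$: the factor $2x+1$ vanishes, so the $H$-term drops out entirely, while $x+x^2=-\tfrac14$, giving $I(G,-\tfrac12)=-\tfrac14\,I(H_v,-\tfrac12)$. Consequently
\[
(-1)^{\alpha(G)/2}I(G,-\tfrac12)=(-1)^{(\alpha(H)+1)/2}\left(-\tfrac14\right)I(H_v,-\tfrac12)=\tfrac14\,(-1)^{(\alpha(H)+1)/2+1}\,I(H_v,-\tfrac12),
\]
and since $(\alpha(H)+1)/2+1\equiv(\alpha(H)-1)/2=\alpha(H_v)/2\pmod 2$, the right-hand side equals $\tfrac14\,(-1)^{\alpha(H_v)/2}I(H_v,-\tfrac12)$. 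Hence $(-1)^{\alpha(G)/2}I(G,-\tfrac12)\geq 0$ holds if and only if $(-1)^{\alpha(H_v)/2}I(H_v,-\tfrac12)\geq 0$ does, which is exactly the claimed equivalence.

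The algebra is routine; the one delicate point --- and essentially the only place the hypotheses are used beyond guaranteeing that $G$ is a bona fide instance of the conjecture --- is the parity bookkeeping. One genuinely needs $\alpha(H_v)=\alpha(H)-1$ on the nose: a mere inequality $\alpha(H_v)\leq\alpha(H)-1$ would not determine the sign $(-1)^{\alpha(H_v)/2}$, and it is exactly here that the W$_2$ property of $H$ (Lemma \ref{W2}) together with \cite[Theorem 3]{pinter2} is needed. As a sanity check, in the extreme case $\alpha(H)=1$ one has $H=K_2$ and $H_v$ the empty graph, and both sides of the equivalence do hold.
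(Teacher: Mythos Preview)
Your proof is correct and follows essentially the same route as the paper: evaluate the identity of Theorem \ref{Tr-ind poly} at $x=-\tfrac12$, note that the $H$-term vanishes, and then match signs using $\alpha(H_v)=\alpha(G)-2$. You are more explicit than the paper in justifying $\alpha(H_v)=\alpha(H)-1$ (via Lemma \ref{W2} and \cite[Theorem~3]{pinter2}), whereas the paper simply asserts this equality; otherwise the arguments coincide.
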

\begin{proof}
According to \pref{Tr-ind poly}, $I(G,-1/2)= (-1/4)I(H_v,-1/2)$. Noting that $\alpha(H_v)=\alpha(G)-2$, we conclude
that $(-1)^{\alpha(G)/2}I(G,-1/2)\geq 0$ \ifof $(-1)^{\alpha(H_v)/2}I(H_v,-1/2)\geq 0$, as claimed.
\end{proof}

Recall that the Stanley-Reisner algebra of the disjoint union of two graphs is isomorphic to the tensor product of the
Stanley-Reisner algebras of the two graphs. Hence a graph is Gorenstein over $K$ \ifof all of its connected components
are Gorenstein over $K$.
\begin{thm}\label{plan charney}
Suppose that $G$ is a planar Gorenstein graph without isolated vertices, girth$(G)\geq 4$ and $\alpha(G)$ is even.
Then the Charney-Davis conjecture holds for $G$.
\end{thm}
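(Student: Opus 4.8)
The plan is to reduce to the structural classification of planar Gorenstein graphs of girth at least four and then induct, using the results already established about Trung's construction. First I would recall from \cite{large girth} (the paper that characterizes planar Gorenstein graphs of girth $\geq 4$) that every such graph is obtained, up to disjoint union, from finitely many explicit ``base'' graphs by repeatedly applying Trung's construction $\T(-,v)$; in particular the base case consists of the small well-understood Gorenstein graphs (e.g.\ $K_2$, $C_5$, and the specified graph on $8$ vertices from \cite{pinter1}), for which the Charney--Davis inequality can be checked by hand. Since a graph is Gorenstein over $\Q$ iff each connected component is (as noted just before the statement), and $\alpha$ and $I(G,-1/2)$ are additive/multiplicative over disjoint unions respectively, it suffices to prove the inequality for connected planar Gorenstein graphs of girth $\geq 4$.

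So let $G$ be connected, planar, Gorenstein over $\Q$, with $\grt(G)\geq 4$. If $G$ is one of the base graphs, verify $(-1)^{\alpha(G)/2}I(G,-1/2)\geq 0$ directly (and note that when $\alpha(G)$ is odd, Lemma~\ref{Euler}\pref{Dehn} gives $I(G,-1/2)=0$, so the Charney--Davis conjecture holds trivially in that case). Otherwise write $G=\T(H,v)$ for some planar Gorenstein $H$ of girth $\geq 4$ with fewer vertices and some non-isolated $v\in\V(H)$; such an $H$ is Gorenstein by Theorem~\ref{Tr goren}\pref{Tr Goren2}. Now split on the parity of $\alpha(H)$. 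If $\alpha(H)$ is even, then $\alpha(G)=\alpha(H)+1$ is odd, so $G$ has an Eulerian independence complex (being Gorenstein over $\Q$ and without isolated vertices), whence $I(G,-1/2)=0$ by Lemma~\ref{Euler}\pref{Dehn} and there is nothing to prove. If $\alpha(H)$ is odd, then $\alpha(G)=\alpha(H)+1$ is even, and Corollary~\ref{Tr charney} tells us that $G$ satisfies Charney--Davis iff $H_v$ does, where $\alpha(H_v)=\alpha(G)-2=\alpha(H)-1$ is even. By Corollary~\ref{G_F goren}, $H_v$ is again Gorenstein over $\Q$ (it is $H_{\{v\}}$ for the non-maximal independent set $\{v\}$, which is non-maximal since $\alpha(H)\geq 2$), and it is planar with girth $\geq 4$ as an induced subgraph of $H$. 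Since $H_v$ has strictly fewer vertices than $G$, the induction hypothesis applies and yields the Charney--Davis inequality for $H_v$, hence for $G$.

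The one subtlety to handle carefully is that the induction must be on a quantity that decreases in both reduction steps, namely $|\V(G)|$: passing from $G=\T(H,v)$ to $H$ drops three vertices, and passing from $H$ to $H_v$ drops at least two more (since $v$ is non-isolated, $|\N_H[v]|\geq 2$), so $H_v$ is strictly smaller than $G$ and the recursion is well-founded. I also need the classification of \cite{large girth} to guarantee that a connected planar Gorenstein graph of girth $\geq 4$ which is not on the finite base list really does arise as $\T(H,v)$ with $H$ in the same class --- this is exactly the content of that classification, so I would cite it rather than reprove it. The genuinely new input from this paper is Corollary~\ref{Tr charney} (via the independence-polynomial formula of Theorem~\ref{Tr-ind poly}), which converts the Charney--Davis question for $G$ into the same question for the smaller graph $H_v$; the rest is bookkeeping on parities and a finite base-case check.
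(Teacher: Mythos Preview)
Your overall strategy matches the paper's: induct on $|\V(G)|$, strip off the disconnected case, invoke the classification from \cite{large girth} to write a connected $G$ of girth $4$ as $\T(H,v)$, and then use Corollary~\ref{Tr charney} to pass to $H_v$. The ``parity split'' you do is harmless (the hypothesis $\alpha(G)$ even already forces $\alpha(H)$ odd), and the disconnected reduction is fine once you spell out the odd/odd subcase via Lemma~\ref{Euler}\pref{Dehn}, as the paper does.

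However, there is a genuine gap in your inductive step: you never verify that $H_v$ has no isolated vertices, and this is \emph{not} automatic. The induction hypothesis is exactly the statement of the theorem, so it only applies to Gorenstein graphs \emph{without isolated vertices}; if $H_v$ had isolated vertices the desired inequality could actually fail (e.g.\ two isolated points give $\alpha=2$ and $I(-1/2)=1/4$, so $(-1)^{1}\cdot 1/4<0$). The paper closes this gap by using more of the classification than you cite: \cite[Lemma~3.2]{large girth} says that in the decomposition $G=\T(H,v)$ the vertex $v$ has degree $2$ in $H$. With $\deg_H(v)=2$, an isolated vertex $y$ of $H_v$ would have all its $H$-neighbors in $\N_H(v)$; since $H$ is W$_2$ it has no leaves, so $y$ is adjacent to both neighbors of $v$, producing a $4$-cycle through $v$. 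But \cite[Theorem~2]{pinter1} says every vertex on a $4$-cycle in a W$_2$ graph has degree at least $3$, contradicting $\deg_H(v)=2$. You need to insert this argument (and the degree-$2$ information from the classification) before invoking the induction hypothesis on $H_v$; without it the recursion is not justified.
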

\begin{proof}
We prove the statement by induction on $|\V(G)|$. If $G$ is not connected, say $G$ is a disjoint union of $G_1$ and
$G_2$, then both $G_1$ and $G_2$ are Gorenstein graphs without isolated vertices. If $\alpha(G_1)$ and $\alpha(G_2)$
are odd, then according to \pref{Euler}, $I(G_i,-1/2)=0$ for both $i$'s and if $\alpha(G_1)$ and $\alpha(G_2)$ are
even, then by the induction hypothesis, $(-1)^{\alpha(G_i)/2} I(G_i,-1/2)\geq 0$ for both $i=1,2$. Therefore the result
follows from the fact that $I(G,x)=I(G_1,x)I(G_2,x)$ (see for example \cite[Section 2]{survey}).

Thus we assume that $G$ is connected. If $G$ has girth $\geq 5$, then as $G$ is W$_2$ and by \cite[Theorem 7]{pinter2},
$G \cong K_2$ or $G \cong C_5$, both of which satisfy the Charney-Davis conjecture. So we can suppose that
girth$(G)=4$. Then according to \cite[Lemma 3.2]{large girth}, $G$ is constructed by several application of Trung's
construction on $C_5$, where in each application the chosen vertex should be a vertex of degree 2. Thus we can assume
that $G=\T(H,v)$, for a planar graph $H$ of girth at least 4 which does not have any isolated vertex and $\deg_H(v)=2$.
Clearly $H_v$ is planar and has girth at least 4. Also it is Gorenstein by \pref{G_F goren}. If $H_v$ has an isolated
vertex, say $y$, then in $H$, $y$ is adjacent to both neighbors of $v$ (else $\deg_H(y)=1$, which contradicts
\pref{W2}). Consequently, $\{y,v\}\cup N_H(v)$ is a 4-cycle in $H$. But this is against \cite[Theorem 2]{pinter1},
which says that every vertex on a 4-cycle in a W$_2$ graph has degree at least 3. Hence $H_v$ has no isolated vertex
and the Charney-Davis conjecture holds for $H_v$ by the induction hypothesis. Now the result follows from \pref{Tr
charney}.
\end{proof}


\end{document}